\newtheorem{theorem}{Theorem}
\newtheorem{proposition}[theorem]{Proposition}
\newtheorem{lemma}[theorem]{Lemma}
\theoremstyle{definition}
\newcommand{\level}{{\ell}}
\renewcommand{\P}{{\mathbb P}}
\renewcommand{\int}{\operatorname{int}}
\newcommand{\Ha}{{H^{(1)}}}
\newcommand{\Ka}{{K^{(1)}}}
\newcommand{\Sa}{{S^{(1)}}}
\newcommand{\rhoa}{{\rho^{(1)}}}
\newcommand{\ba}{{b^{(1)}}}
\newcommand{\sa}{{s^{(1)}}}
\DeclareMathOperator{\Sing}{Sing}
\DeclareMathOperator{\mult}{mult}
\DeclareMathOperator{\Pic}{Pic}
\begin{document}

\title{An inequality for adjoint rational surfaces}

\author{Christian Haase}
\address{Christian Haase \\ 
  Institut f\"ur Mathematik \\
  Goethe-Universit\"at \\
  60325 Frankfurt/M\\
  Germany}
\email{haase@math.uni-frankfurt.de}
\urladdr{http://www.math.uni-frankfurt.de/\~{}haase}
\thanks{Research of Haase supported by DFG Heisenberg fellowship HA
  4383/4}

\author{Josef Schicho}
\address{Josef Schicho \\
  RICAM \\
  Austrian Academy of Sciences \\
  Altenberger Stra\ss e 69 \\
  4040 Linz \\
  Austria}
\email{Josef.Schicho@oeaw.ac.at}
\urladdr{http://www.ricam.oeaw.ac.at/research/symcomp}
\thanks{Research of Schicho supported by the FWF: F22766-N18.}

\begin{abstract}
  We generalize an inequality for convex lattice polygons -- aka toric
  surfaces -- to general rational surfaces. 
\end{abstract}

\maketitle

Our collaboration started when the second author proved an inequality
for algebraic surfaces which, when translated via the toric dictionary
into discrete geometry, yields an old inequality by Scott~\cite{Scott}
for lattice polygons.

In a previous article~\cite{HS2i+7}, we were then able to refine this
estimate on the discrete side. Here, we generalize the refinement to
(non-toric) algebraic surfaces. We use the ideas of one of the
discrete proofs.

\section{Introduction}\noindent

Let $S$ be a smooth complex algebraic surface, let $H$ be a big
and nef divisor on $S$, and let $K$ denote the canonical class of $S$.
Roughly speaking, the adjoint surface $\Sa$ of $(S,H)$ is (the minimal
resolution of) the image of $S$ in $|H+K|^*$, and the level of $(S,H)$
is the number of iterations of this adjunction process until
$H^{(\level)}$ on $S^{(\level)}$ is no longer big. (See
Section~\ref{sec:adjoined-pair} for precise definitions.)
If $S$ is rational, we prove the inequality
$$ 2\level b \le d + 9 \level^2\,, $$
where $d=H^2$ is the degree of $S$ in $|H|^*$
and $b=-HK$ is the anti-canonical degree of $H$.
Note that the inequality only makes sense for surfaces with negative
Kodaira dimension. We do not know about its validity in case of irrational
ruled surfaces (examples show that a much stronger inequality should
hold here).

For toric surfaces, the inequality was proved in~\cite{HS2i+7}, using
the toric dictionary in the following table.

\newlength{\sskip}
\setlength{\sskip}{3mm}
\begin{center} \footnotesize
  \begin{tabular}{c | c}
    lattice polygon $P$ & toric surface $X_P$ \\[\sskip]
    \hline
    \raisebox{\sskip}{\phantom{!}} twice the surface area & the degree \\
    $2a=2a(P)$ & $d=d(X_P)$ \\[\sskip]
    the number of interior lattice points & the sectional genus \\
    $i=i(P)$ & $s(X_P)$ \\[\sskip]
    the number of lattice points & the ambient dimension + 1\\
    $n=n(P)$ & $\dim {\mathbb P}^{n-1} + 1$ 
    \\[\sskip]
    Pick's formula & Riemann-Roch \\
    $\displaystyle a = i + \frac{b}{2} -1$ & $\displaystyle d = n+s-2$
    \\[\sskip]
    the number of boundary lattice points &
    the anti-canonical degree of $H$ \\
    $b=b(P)$ & $-HK$
  \end{tabular}
  \medskip
\end{center}






\section{The adjoined pair} \label{sec:adjoined-pair}

We need some concepts on adjunction theory for rational
surfaces. We will use \cite{Reid93} as the basic reference
for adjunction theory for surfaces.

We consider rational surfaces $F$, possibly singular, in projective
space $\P^N$, $N>0$.
There is a resolution of singularities $f:S\to F\subset\P^N$ with
nonsingular $S$, and the pullback of the line bundle ${\mathcal O}(1)$
defines a nef and big divisor class $H\in\Pic(S)$.
Working with nonsingular surfaces and nef and big divisor classes is
technically easier than working with surfaces with arbitrary
singularities, so we think of $F$ as being represented by the pair
$(S,H)$. Such a pair is called a polarized surface, and $H$ is called
the polarization divisor class. We will always require that the
polarization divisor class is nef, and the adjunction process starts
with a polarization divisor which is nef and big.

As the resolution of singularities is not unique, we may have
non-isomorphic polarized surfaces representing the same projective and
possibly singular surface. There is, however, a minimal one
$(S_0,H_0)$. For any other polarized surface $(S_1,H_1)$ representing
the same singular surface, there is a morphism $g:S_1\to S_0$ such
that $g^\ast H_0=H_1$. Minimality of a polarized surface $(S,D)$ is
characterised by the absence of $-1$-curves $E$ such that $EH=0$.

Adjunction is an iterative process to replace a polarized surface
$(S,H)$, with $H$ nef and big, by another polarized surface
$(\Sa,\Ha)$, which is ``smaller'' in a certain sense. When the process
ends, we have reached a particularily simple situation. More
precisely, adjunction terminates if $H+K$ is not effective. Because 
of the formula $s(H)=h^0(H+K)$ for nef and big divisors on a rational
surface, it follows that either $H$ is not big or that the genus of
$H$ is $0$. 

In the other case, adjunction proceeds in two steps. First, we
transform $(S,H)$ into a minimal pair by successively blowing down all
$-1$-curves orthogonal to the polarization divisor. This produces a
birational morphism $f:S\to \Sa$ such that $H$ is orthogonal to the
kernel of $f_\ast$. Second, we set $\Ha:=f_\ast(H)+\Ka$, where $\Ka$
is the canonical divisor class of $\Sa$.

\begin{lemma}
If adjunction is defined, i.e., if $H+K$ is effective, then $\Ha$ is
again nef.
\end{lemma}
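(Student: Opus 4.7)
My plan is to reduce to the case where $(S, H)$ is already minimal (so $\Sa = S$ and $f$ is the identity), and then verify nefness of $H + K$ directly, using the effectivity hypothesis together with the absence of $-1$-curves orthogonal to $H$.

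\textbf{Reduction.} Because $f \colon S \to \Sa$ contracts only $-1$-curves $E$ with $E \cdot H = 0$, we have $f^{*}\Ka = K_S - \mathcal{E}$ for the effective exceptional divisor $\mathcal{E}$, and $f^{*}f_{*}H = H$. Hence $f_{*}K_S = \Ka$, so
\[
  \Ha \;=\; f_{*}H + \Ka \;=\; f_{*}(H+K),
\]
which is effective because $H+K$ is. The projection formula also yields, for any irreducible $C \subset \Sa$ with strict transform $\tilde C \subset S$, that $f_{*}H \cdot C = H \cdot \tilde C \ge 0$, so $f_{*}H$ is nef; and $(f_{*}H)^{2} = H^{2} > 0$, so $f_{*}H$ is big. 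By construction $(\Sa, f_{*}H)$ has no $-1$-curve orthogonal to $f_{*}H$. Hence it suffices to prove the claim under these minimal hypotheses.

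\textbf{Minimal case.} Now assume $(S,H)$ has no $-1$-curve orthogonal to $H$, $H$ is nef and big, and let $D$ be an effective divisor with $D \sim H+K$. Let $C \subset S$ be any irreducible curve. If $C^{2} \ge 0$, decompose $D = nC + D'$ with $n = \mult_{C}(D) \ge 0$ and $C \notin \mathrm{supp}\, D'$; then $(H+K)\cdot C = nC^{2} + D'\cdot C \ge 0$, since both summands are nonnegative. If instead $C^{2} < 0$, adjunction gives $K \cdot C = 2 g_a(C) - 2 - C^{2} \ge -2 - C^{2} \ge -1$, and combined with $H \cdot C \ge 0$ this forces $(H+K) \cdot C \ge -1$. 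Equality would require $H \cdot C = 0$, $g_a(C) = 0$, and $C^{2} = -1$, i.e., $C$ would be a $-1$-curve orthogonal to $H$, ruled out by minimality. Thus $(H+K) \cdot C \ge 0$ in every case.

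\textbf{Main obstacle.} The content of the lemma is really in the minimal case, whose two sub-arguments rest on different hypotheses: effectivity of $H+K$ handles $C^{2} \ge 0$, while the absence of orthogonal $-1$-curves handles the $C^{2} < 0$ possibility that adjunction permits. The reduction step is essentially bookkeeping but needs care, since iterated blow-downs can turn former $-2$-curves into $-1$-curves (possibly newly orthogonal to the pushed-forward polarization); one simply runs the minimalization to termination (finite, since the Picard rank strictly drops at each step) and verifies the projection-formula identities above at the endpoint.
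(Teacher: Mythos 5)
Your proof is correct and follows essentially the same route as the paper's: reduce to the minimal pair, then split on the sign of $C^2$, using effectivity of $H+K$ when $C^2 \ge 0$ and the genus formula plus the absence of orthogonal $-1$-curves when $C^2 < 0$. The only difference is in the first case, where the paper invokes Riemann--Roch to show $C$ moves in a linear system and hence meets the effective class $\Ha$ nonnegatively, while you decompose the effective divisor directly; both arguments are standard and equally valid.
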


\begin{proof}
This is well-known (see, e.g.~\cite{Reid93}, Proof of Theorem D.3.3);
we give the proof just for the sake of completeness. Assume that $C$
is a prime divisor in $\Sa$ such that $C\Ha<0$. Then $C\Ka<0$. If
$C^2\ge 0$, then Riemann-Roch implies $h^0(C)>1$, hence $C$ moves in a
linear system. Then it cannot have negative intersection with the
effective divisor class $\Ha$. Hence $C^2<0$. By the genus formula,
$C^2+C\Ka\ge -2$. This leaves just room for one case, namely
$C^2=C\Ka=-1$ and $Cf_\ast H=0$. But this contradicts minimality of
the pair $(\Sa,f_\ast H)$.
\end{proof}

The adjunction process is finite, because on any rational surface
there is a nef divisor class $L$ which satisfies $LK<0$, namely the
pullback of the class of lines along the inverse of a rational
parametrization. Then $LH/(-LK)$ is an upper bound for the number of
possible adjunction steps with initial polarized surface $(S,H)$. 

We set
$$ \level(S,H) := \sup \left\{ \frac pq \ : \ 
  qH+pK \text{ effective } \right\} . $$
Then the possible number of adjuntion steps is $\lfloor\level\rfloor$,
the largest integer less than or equal to $\level$.
For the final polarized surface, we have three cases,
by Theorem D.4.1 in \cite{Reid93}. Either
\begin{enumerate}
\item $\level \not\in \mathbb{N}$, but $2\level \in \mathbb{N}$ or
  $3\level \in \mathbb{N}$ and
  $H^{(\lfloor\level\rfloor)}$ is a big divisor class of genus
  $0$, or 
\item $\level \in \mathbb{N}$ and $(H^{(\level)})^2=0$ and either
  \begin{enumerate}
  \item $H^{(\level)}=0$ or
  \item $H^{(\level)}=kP$ for some integer $k>0$ and class $P$ of a
    pencil of genus $0$.
  \end{enumerate}
\end{enumerate}
For all cases, there are toric examples.

\section{The proof}

\subsection{Intersection theory}
We recall two well-known facts on rational surfaces which we will
need in the proof. We write $\rho$ for the rank of the Picard group of the
rational surface under consideration.

\begin{proposition}
$K^2+\rho=10$, and $K^2=9$ $\Rightarrow$ $S = \P^2$.
\end{proposition}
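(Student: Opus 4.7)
The plan is to read off both assertions from Noether's formula once the Euler number of a rational surface is expressed in terms of $\rho$. The only real content beyond bookkeeping is the classification of minimal rational surfaces, which is invoked once at the end.

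First I would apply Noether's formula $12\chi(\O_S) = K^2 + c_2(S)$, where $c_2(S)$ denotes the topological Euler number. Since $S$ is rational we have $q(S) = h^1(\O_S) = 0$ and $p_g(S) = h^2(\O_S) = 0$, so $\chi(\O_S) = 1$. This rewrites Noether's formula as $K^2 + c_2(S) = 12$.

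Next I would show $c_2(S) = 2 + \rho$. From $q=0$ and the fact that a smooth rational surface is simply connected, $b_1(S) = b_3(S) = 0$ and $H^2(S,\Z)$ is torsion-free. From $p_g = h^{0,2} = 0$ we get $H^2(S,\C) = H^{1,1}(S)$, so the Lefschetz $(1,1)$-theorem yields $b_2(S) = \rho$. Combining with $b_0 = b_4 = 1$ gives $c_2(S) = 2 + \rho$, and substituting into Noether's formula yields $K^2 + \rho = 10$.

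For the second assertion, the identity just established forces $\rho = 1$ whenever $K^2 = 9$. Any birational morphism from $S$ to another smooth surface factors through a blow-down of a $(-1)$-curve and strictly decreases $\rho$, so $S$ must already be a minimal model. The classification of minimal rational surfaces (e.g.~Beauville) lists only $\P^2$, with $\rho = 1$, and the Hirzebruch surfaces $\mathbb{F}_n$ for $n \neq 1$, all of which have $\rho = 2$. Hence $S = \P^2$.

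The argument has essentially no obstacle; the one point requiring mild care is the identification $b_2 = \rho$, which relies on the vanishing of $p_g$ for rational surfaces.
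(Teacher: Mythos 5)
Your proof is correct, but it reaches $K^2+\rho=10$ by a genuinely different route. The paper's argument is purely birational bookkeeping: a single blow-up decreases $K^2$ by $1$ and increases $\rho$ by $1$, every birational map between smooth surfaces factors into blow-ups and their inverses, so $K^2+\rho$ is a birational invariant, and it equals $10$ on $\P^2$. Your argument instead computes the invariant directly from Noether's formula $12\chi(\O_S)=K^2+c_2(S)$ together with $\chi(\O_S)=1$ and the identification $b_2=\rho$ via $p_g=0$ and Lefschetz $(1,1)$. The paper's route is lighter on machinery (no Hodge theory, no Noether's formula) but leans on the structure theorem for birational maps of surfaces and on rationality to anchor the value at $\P^2$; yours is heavier but more robust, since it establishes $K^2+\rho=10$ for any smooth projective surface with $q=p_g=0$ without reference to a birational model. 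For the second assertion ($K^2=9\Rightarrow S=\P^2$) the two proofs coincide: $\rho=1$ forces minimality, and the classification of minimal rational surfaces leaves only $\P^2$. Your explicit remark that the Hirzebruch surfaces all have $\rho=2$ makes this step slightly more self-contained than the paper's one-line appeal to the classification.
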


\begin{proof}
In a single blowing up, the number $K^2$ decreases by 1 and the number
$\rho$ increases by 1. Because every birational map is a composition
of blowing ups and their inverses, it follows that $K^2+\rho$ is a
birational invariant. It assumes the value 10 for
$S=\P^2$, hence it is 10 for all rational surfaces.

If $K^2=9$, then the Picard rank must be 1. By the classification of
minimal rational surfaces, $S$ must be $\P^2$.
\end{proof}

We will formulate the proof in terms of the genus $s$ of the
intersection of $S$ with a generic hyperplane in $|H|^*$. The
parameters for $(S,H)$ and $(\Sa,\Ha)$ are related as follows.

\begin{proposition}\label{prop:s=s1+b1}
$\ba+\sa=s$.
\end{proposition}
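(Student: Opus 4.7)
The plan is to compute both sides using the genus formula (adjunction) and unwind the definitions, using that the blowdown $f\colon S\to \Sa$ constructed in Section~\ref{sec:adjoined-pair} contracts only curves orthogonal to $H$, so pullback and projection formulas will strip away all exceptional contributions cleanly.

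First I would rewrite everything as intersection numbers. By the genus formula on $S$ and on $\Sa$,
\[
s = 1 + \tfrac{1}{2}\bigl(H^2 + HK\bigr), \qquad
\sa = 1 + \tfrac{1}{2}\bigl((\Ha)^2 + \Ha \Ka\bigr),
\]
and $\ba = -\Ha \Ka$. Subtracting, the target identity $\ba+\sa = s$ is equivalent to
\[
(\Ha)^2 - \Ha \Ka \;=\; H^2 + HK.
\]

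Next I would express $\Ha$ via its definition $\Ha = f_\ast H + \Ka$. Setting $H' := f_\ast H$, one gets
\[
(\Ha)^2 - \Ha \Ka \;=\; (H' + \Ka)^2 - (H' + \Ka)\Ka \;=\; (H')^2 + H'\,\Ka.
\]
So it suffices to prove $(H')^2 = H^2$ and $H' \Ka = HK$. For this I would use that $f$ is a composition of blowdowns of $-1$-curves orthogonal to $H$, hence $H = f^\ast H'$, and $f^\ast \Ka = K - E$ where $E$ is the total exceptional divisor. By the projection formula and $H\cdot E = 0$,
\[
(H')^2 = (f^\ast H')^2 = H^2, \qquad
H' \Ka = f^\ast H' \cdot f^\ast \Ka = H\cdot(K - E) = HK.
\]

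The only subtle point, and the place where minimality of $(\Sa, H')$ really matters, is the orthogonality $H\cdot E = 0$: this is exactly what the first step of the adjunction process (blowing down all $-1$-curves orthogonal to the polarization) ensures, so $f_\ast H$ pulls back to $H$ and the exceptional divisor is numerically invisible to $H$. Combining the two identities gives $(\Ha)^2 - \Ha\Ka = H^2 + HK = 2(s-1)$, and therefore $\sa + \ba = s$. Note that the argument does not use any of the finer adjunction classification of Theorem D.4.1; it only needs the construction of the adjoint pair and the projection formula.
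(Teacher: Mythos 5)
Your proof is correct and takes essentially the same route as the paper's: both reduce the claim via the genus formula and the definition $\Ha = f_\ast H + \Ka$ to an intersection-number identity on $S$, and both rely on the projection formula together with the orthogonality of $H$ to the contracted curves (so that $f^\ast f_\ast H = H$ and the exceptional divisor is invisible to $H$). The only difference is organizational: the paper computes $\ba+\sa=\tfrac12\,\Ha f_\ast(H)+1$ and pulls back once, whereas you split the step into the two identities $(f_\ast H)^2=H^2$ and $(f_\ast H)\Ka=HK$.
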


\begin{proof}
By definition, $\ba=-\Ha\Ka$, and by the genus formula,
$\sa=\frac12\Ha(\Ha+\Ka)+1$.
Let $f:S\to\Sa$ be the minimalisation morphism.  Then 
\[ \ba+\sa=\frac{\Ha(\Ha-\Ka)}{2}+1=\frac{\Ha f_\ast(H)}{2}+1 \]
\[	=\frac{f^\ast(\Ha)H}{2}+1 = \frac{(H+K)H}{2}+1 = b. \]
\end{proof}

\subsection{The induction step}
\begin{lemma}\label{lemma:step}
  Suppose $\Ha$ is big, and denote $\ba$ the anti-canonical degree of
  $\Ha$ on $\Sa$. Then $ b \le  \ba + 9 $ with equality if and only if
  $S = \P^2$.
\end{lemma}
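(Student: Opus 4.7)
The plan is to reduce the lemma to a single identity, $b = \ba + \Ka^2$, after which the bound $\Ka^2 \le 9$ from the Proposition above immediately gives $b \le \ba + 9$, with equality exactly when $\Ka^2 = 9$, i.e.\ when $\Sa = \P^2$ (equivalently, when the minimal polarized model of $(S,H)$ is projective space).

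To derive the identity, I would use the definition $\Ha = f_\ast(H) + \Ka$, where $f\colon S \to \Sa$ is the minimalization morphism. Expanding,
\[ \Ha \cdot \Ka \ = \ f_\ast(H) \cdot \Ka + \Ka^2, \]
and the projection formula rewrites the first term as $H \cdot f^\ast \Ka$. Since $f$ is a composition of blow-downs of $-1$-curves, one has $K_S = f^\ast \Ka + E$ for an effective divisor $E$ supported on the total transforms of the contracted exceptional curves. These classes span $\ker f_\ast$, and by the minimality characterisation recalled in Section~\ref{sec:adjoined-pair}, $H$ is orthogonal to $\ker f_\ast$. Hence $H \cdot f^\ast \Ka = HK = -b$, and substituting yields $\ba = -\Ha \cdot \Ka = b - \Ka^2$, as desired.

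There is no serious obstacle: once the projection formula and the kernel characterisation of minimality are in play, the argument is essentially formal. The point that needs care is that $H$ must be orthogonal to the \emph{total transforms} of all contracted exceptional curves (not merely to the last-blown-up $-1$-curve on $\Sa$), which is precisely the content of $H \perp \ker f_\ast$. The equality clause is then read off from the classification part of the Proposition, which forces $\rho(\Sa) = 1$ and hence $\Sa = \P^2$.
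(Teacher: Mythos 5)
Your proof is correct and follows essentially the same route as the paper: both reduce the lemma to the identity $b = \ba + \Ka^2$, obtained from the projection formula together with the orthogonality of $H$ to $\ker f_*$, and then conclude via $\Ka^2 = 10 - \rhoa \le 9$. The only cosmetic difference is that the paper pushes $K$ forward using $H = f^*f_*H$ while you pull $\Ka$ back and absorb the exceptional class $K - f^*\Ka$ using $H\perp\ker f_*$; note that, exactly as in the paper's own argument, the equality case you obtain really characterises $\Sa = \P^2$ (the minimal model) rather than $S = \P^2$ itself.
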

\begin{proof}
If we intersect
$$ H = f^*f_*H = f^*(\Ha-\Ka) $$
with $-K$, we get
\begin{align*}
  b = -HK &= -f^*(\Ha-\Ka)K
  = -(\Ha-\Ka)f_*K \\
  &= -\Ha\Ka + \Ka\Ka = \ba + 10 - \rhoa \,.
\end{align*}
\end{proof}

\begin{theorem}
  Let $H$ be a nef and big divisor on the smooth rational surface
  $S$. Let $\level$ be the level, $s$ the sectional genus of $(S,H)$,
  and let $b = -KH$, $d=H^2$. Then
  \begin{equation} \label{ineq:homog}
    2\level b \le d + 9 \level^2\,,
  \end{equation}
  or equivalently
  \begin{equation} \label{ineq:orig}
    (2\level-1) b \le 2s + 9\level^2 - 2 \,.
  \end{equation}
\end{theorem}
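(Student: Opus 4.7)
My plan would be to iterate the adjunction process $k := \lfloor\level\rfloor$ times and telescope the resulting identities, reducing the inequality in its second form $(2\level - 1)b \le 2s + 9\level^2 - 2$ to a \emph{terminal inequality} on the final polarized pair $(S^{(k)}, H^{(k)})$ at residual level $\level' := \level - k \in [0,1)$.

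The two key ingredients, applied iteratively, give the following. Proposition~\ref{prop:s=s1+b1} at each step yields $s = \sum_{i=1}^{k} b^{(i)} + s^{(k)}$, and Lemma~\ref{lemma:step} at each step yields $b^{(i)} - b^{(i+1)} \le 9$ for $0 \le i \le k-1$. I then form the weighted telescope sum of the lemma,
$$\sum_{i=0}^{k-1} \alpha_i (b^{(i)} - b^{(i+1)}) \le 9 \sum_{i=0}^{k-1} \alpha_i = 9k(2\level - k),$$
with nonnegative weights $\alpha_i := 2\level - 1 - 2i$ (nonnegative because $\level \ge k$), substitute the expression for $s$ from the proposition, and simplify using $\level^2 - k(2\level - k) = \level'^2$. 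The outcome is
$$(2\level - 1) b - (2s + 9\level^2 - 2) \le (2\level' - 1) b^{(k)} - (2 s^{(k)} + 9\level'^2 - 2),$$
so it suffices to establish the terminal inequality $(2\level' - 1) b^{(k)} \le 2 s^{(k)} + 9\level'^2 - 2$ for $(S^{(k)}, H^{(k)})$.

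This terminal inequality is handled case by case via the classification recalled in Section~\ref{sec:adjoined-pair}. Cases (2a) and (2b) give equality directly: in (2a), $H^{(k)} = 0$ and both sides are $0$; in (2b), $H^{(k)} = mP$ for a genus-$0$ pencil $P$ and both sides equal $-2m$. In case (1), $H^{(k)}$ is big of genus $0$, so $s^{(k)} = 0$ and the genus formula forces $b^{(k)} = d^{(k)} + 2$; the inequality becomes a bound on $b^{(k)}$ at $\level' \in \{1/2,\,1/3,\,2/3\}$, with $\level' = 1/2$ trivial and $\level' = 1/3$ immediate from bigness ($d^{(k)} \ge 1$ gives $b^{(k)} \ge 3$). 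The subcase $\level' = 2/3$ is the main obstacle: it demands $b^{(k)} \le 6$, whereas pairing the effective class $3H^{(k)} + 2K^{(k)}$ with the nef $H^{(k)}$ only yields the matching lower bound $b^{(k)} \ge 6$. The desired inequality is therefore an equality that must be forced from the classification of terminal polarized rational surfaces of level $2/3$ and sectional genus $0$ (prototype $(\P^2, \O(2))$), or alternatively via a Hodge index argument applied to $3H^{(k)} + 2K^{(k)}$ and $H^{(k)}$.
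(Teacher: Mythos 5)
Your telescoping reduction is algebraically sound: with weights $\alpha_i = 2\level-1-2i$ the weighted sum of Lemma~\ref{lemma:step} combined with the iterated Proposition~\ref{prop:s=s1+b1} does reduce \eqref{ineq:orig} to the terminal inequality $(2\level'-1)b^{(k)} \le 2s^{(k)} + 9\level'^2 - 2$, and your cases $\level' \in \{0, 1/3, 1/2\}$ check out. But the argument is not complete: the terminal case $\level'=2/3$ is a genuine gap, and neither of your proposed fixes works as stated. The Hodge index route fails: with $s^{(k)}=0$ one has $b^{(k)}=d^{(k)}+2$, and for $D=3H^{(k)}+2K^{(k)}$ one computes $H^{(k)}D=d^{(k)}-4$ and $D^2=-3d^{(k)}-24+4(K^{(k)})^2\le 12-3d^{(k)}$, so the index inequality $(H^{(k)}D)^2\ge D^2(H^{(k)})^2$ yields only $(d^{(k)}-1)(d^{(k)}-4)\ge 0$, which is consistent with arbitrarily large $d^{(k)}$. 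The classification route is correct in substance ($(\P^2,\mathcal{O}(2))$ is indeed the only terminal pair of residual level $2/3$), but that is a real theorem about pairs with non-effective adjoint which the paper never states in the form you need; you would have to import and justify it. A secondary quibble: Lemma~\ref{lemma:step} is stated under the hypothesis that $\Ha$ is big, so in terminal case (2) you cannot literally invoke it at the last step $i=k-1$; you would need to note that its proof uses only that the adjunction step is defined.

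The paper sidesteps all of this with one observation you missed: inequality \eqref{ineq:homog} is invariant under replacing $H$ by $qH$ (which scales $\level$ and $b$ by $q$ and $d$ by $q^2$), so one may assume the level is an integer. An induction equivalent to your telescope then bottoms out at $\level=1$, where the claim reads $K^2\le (H+K)^2+9$ and follows from $(H+K)^2\ge 0$ together with $K^2\le 9$ for rational surfaces --- no case analysis and no classification of terminal pairs. If you prepend this rescaling step, your telescoping argument becomes a correct unrolled version of the paper's induction; without it, the $\level'=2/3$ case remains open in your write-up.
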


\begin{proof}
  The validity of the statement is preserved if we replace $H$ by $qH$
  for some $q>0$. This is apparent in~\eqref{ineq:homog}.
  So we may assume that the level is integral and proceed by induction
  on $\level$.

  For $\level=1$, the statement is equivalent to $-HK\le H^2+HK+9$.
  This is equivalent to $K^2\le (H+K)^2+9$. But $H+K$ is nef and effective,
  hence $(H+K)^2\ge 0$, so the statement is a consequence of $K^2\le 9$.
  
  If $\level \ge 2$, we have by Lemma~\ref{lemma:step}, induction, and
  Proposition~\ref{prop:s=s1+b1} in that order,
  \begin{align*}
    (2\level-1) b &\le (2\level-1) b^{(1)} + 9(2\level-1) \\
    &= 2 b^{(1)} + (2(\level-1)-1) b^{(1)} + 9(2\level-1) \\
    &\le 2 b^{(1)} + 2s^{(1)} + 9(\level-1)^2 - 2 + 9(2\level-1) \\
    &= 2s + 9\level^2 - 2 \,.
  \end{align*}
\end{proof}

\section{Concluding remarks} \label{sec:conclusion} \noindent

%
%

As Wouter Castryck points out~\cite[(2.7)]{CastryckMovingOut},
it would probably yield much stronger bounds if one found a way to
incorporate the parameter
$$ v := \rho + 2 -\sum_{x \in \Sing F} \mult(x) $$
into the induction inequality of Lemma~\ref{lemma:step}.
Here, the sum runs over the singular points of the image $F$ of $S$ in
$|H|^*$, and the multiplicity $\mult(x)$ of such a point $x$ is the
number of exceptional divisors in its minimal resolution. In the toric
case, $v$ is just the number of vertices of the lattice polygon.


As an application of this circle of ideas, one can estimate the
smallest degree of a parametrization of a rational surface. To this
end, \cite{Schicho98d} bounds the level $\level$ in terms
of the degree $d$ of $S$. The exponent $4$ in this bound is
potentially not optimal. For toric surfaces, there is even a linear 
bound.
The area of a lattice polgon drops at least by~$3$ in each adjunction
step, so the level is bounded by 2/3 times the degree.

\bibliographystyle{plain}
\bibliography{josef}

\begin{thebibliography}{1}

\bibitem{CastryckMovingOut}
W.~Castryck.
\newblock Moving out the edges of a lattice polygon.
\newblock {\em Discr. Comp. Geom.}, 47:496--518, 2012.

\bibitem{HS2i+7}
C.~Haase and J.~Schicho.
\newblock Lattice polygons and the number $2i+7$.
\newblock {\em Math. Monthly}, 2009.

\bibitem{Reid93}
M.~Reid.
\newblock Chapters on algebraic surfaces.
\newblock In J.~Koll{\'a}r, editor, {\em Complex Algebraic Varieties}, pages
  1--154. AMS, 1997.

\bibitem{Schicho98d}
J.~Schicho.
\newblock A degree bound for the parameterization of a rational surface.
\newblock {\em J. Pure Appl. Alg.}, 145:91--105, 1999.

\bibitem{Scott}
P.R. Scott.
\newblock On convex lattice polygons.
\newblock {\em Bull. Austral. Math. Soc.}, 15:395--399, 1976.

\end{thebibliography}
\setlength{\parindent}{0pt}

\end{document}